\documentclass{article}
\usepackage[cp1251]{inputenc}
\usepackage[russian]{babel}
\usepackage{amssymb,amsfonts,amsmath,amsthm,amscd}
\usepackage{graphicx}
\usepackage{enumerate}
\usepackage{soul}
\usepackage[matrix,arrow,curve]{xy}
\usepackage{longtable}
\usepackage{array}
\usepackage{bm}

\emergencystretch=0em
\newdimen\symskip
\newdimen\defskip
\defskip=3pt
\newdimen\parind
\parind=\parindent
\newdimen\leftmarge
\newdimen\theoremshape
\theoremshape=11pt
\clubpenalty10000
\widowpenalty10000
\topsep\defskip
\righthyphenmin=2
\sloppy
\settowidth{\symskip}{Z}
\textwidth=65\symskip
\setlength{\textheight}{40\baselineskip}
\setlength{\textheight}{\baselinestretch\textheight}
\addtolength{\textheight}{\topskip}
\oddsidemargin=4mm
\evensidemargin=4mm
\topmargin=5mm
\mathsurround=0pt

\makeatletter

\newcommand*{\clei}{\nobreak\hskip\z@skip}

\renewcommand{\:}{\textup{:}}
\renewcommand{\~}{\textup{;}}
\DeclareRobustCommand*{\ti}{~\textemdash{} }
\DeclareRobustCommand*{\dh}{\clei\hbox{-}\clei}

\def\thempfn{\ifcase\value{footnote}1\or *\or **\or ***\else\@ctrerr\fi}
\renewcommand\footnoterule{%
  \kern-3\p@
  \hrule\@width1in
  \kern2.6\p@}
\@addtoreset{footnote}{section}
\@addtoreset{footnote}{page}


\renewcommand{\@biblabel}[1]{[#1]}
\renewenvironment{thebibliography}[1]
     {\renewcommand{\refname}{References}%
      \section*{\refname}%
      \@mkboth{\MakeUppercase\refname}{\MakeUppercase\refname}%
      \list{\@biblabel{\@arabic\c@enumiv}}%
           {\itemsep\baselineskip
            \leftmargin\parind
            \settowidth\labelwidth{\@biblabel{#1}}%
            \labelsep\parind\advance\labelsep-\labelwidth
            \@openbib@code
            \usecounter{enumiv}%
            \let\p@enumiv\@empty
            \renewcommand\theenumiv{\@arabic\c@enumiv}}%
      \sloppy
      \clubpenalty4000
      \@clubpenalty\clubpenalty
      \widowpenalty4000%
      \sfcode`\.\@m}
     {\def\@noitemerr
       {\@latex@warning{Empty `thebibliography' environment}}%
      \endlist}

\def\@maketitle{%
  \newpage
  \vskip0.5em%
  MSC \msc%
  \vskip1.5em%
  \begin{center}\bf%
  \let\footnote\thanks%
   {\Large\@author\par}%
   \vskip1em%
   {\LARGE\@title\par}%
   \vskip1em%
   {\large\@date}%
  \end{center}%
  \par
  \vskip1.5em}

\def\@title{\@latex@warning@no@line{No \noexpand\title given}}

\renewcommand\sectionmark[1]{%
 \markright{%
  \ifnum \c@secnumdepth >\z@
   \thesection. \ %
  \fi
 #1}}%
\renewcommand{\section}{\@startsection{section}{1}{0pt}%
{5.5ex plus .5ex minus .2ex}{1.5ex plus .3ex}%
{\center\normalfont\Large\bfseries\sffamily\boldmath}}
\newcommand{\Ss}{\textup{\S\,}}

\def\@postskip@{\hskip.5em\relax}

\def\postsection{.\@postskip@}
\def\@seccntformat#1{\csname pre#1\endcsname\csname the#1\endcsname\csname post#1\endcsname}
\renewcommand{\thesection}{\textup{\arabic{section}}}

\newcommand{\parr}{\par\addvspace{\defskip}}
\newcommand{\theo}[2]{\newtheorem{#1}{#2}}
\newcommand{\deff}[2]{\newenvironment{#1}{\parr\textbf{#2.}}{\parr}}
\theo{theorem}{Theorem}
\theo{lemma}{Lemma}
\theo{prop}{Proposition}
\theo{stm}{Statement}
\theo{imp}{Corollary}
\deff{df}{Definition}
\deff{note}{Note}

\def\@begintheorem#1#2[#3]{%
  \deferred@thm@head{\the\thm@headfont \thm@indent
    \@ifempty{#1}{\let\thmname\@gobble}{\let\thmname\@iden}%
    \@ifempty{#2}{\let\thmnumber\@gobble}{\let\thmnumber\@iden}%
    \@ifempty{#3}{\let\thmnote\@gobble}{\let\thmnote\@iden}%
    \thm@notefont{\bfseries\upshape}%
    \indent%
    \thm@swap\swappedhead\thmhead{#1}{#2}{#3}%
    \the\thm@headpunct
    \thmheadnl 
    \hskip\thm@headsep
  }%
  \ignorespaces}
\renewenvironment{proof}{\parr\pushQED{\qed}\normalfont$\square\quad$}{\popQED\@endpefalse\parr}

\newcommand{\labheadi}[1]{\textup{#1)}}
\renewcommand{\labelenumi}{\labheadi{\arabic{enumi}}}
\renewcommand{\theenumi}{\labheadi{\arabic{enumi}}}

\newcounter{col}
\newcounter{coll}
\newcommand{\mt}[3]{\multicolumn{#1}{#2}{#3}}
\newcommand{\news}{\\\hline}
\newcommand{\refcol}[1]{\addtocounter{col}{#1}}
\newcommand{\fcol}{\setcounter{coll}{\value{col}}}
\newcommand{\mc}[3]{&\mco{#1}{#2}{#3}}
\newcommand{\nc}[2]{\mc{1}{#1}{#2}}
\newcommand{\mco}[3]{\mt{#1}{#2|@{\refcol{#1}}}{#3}}
\newcommand{\mcu}[3]{\mco{#1}{|#2}{#3}}
\newcommand{\ncu}[2]{\mcu{1}{#1}{#2}}
\newcommand{\mtco}[2]{\mt{\value{coll}}{#1}{#2}}
\newcommand{\mtc}[1]{\mtco{c}{#1}}

\newcommand{\nwl}{\newline}
\newcommand{\lonu}[3]{%
\setcounter{col}{0}
\begin{longtable}{#1}
\hline#2
\fcol
\endfirsthead
\hline#2
\endhead
\mtc{}\\
\mtc{\textit{Continuation on the next page}}
\endfoot
\endlastfoot
\hline#3\news\end{longtable}}

\makeatother

\newcommand{\sm}{\setminus}
\newcommand{\cln}{\colon}
\newcommand{\nl}{\lhd}
\newcommand{\ol}{\overline}
\renewcommand{\ge}{\geqslant}
\renewcommand{\le}{\leqslant}

\newcommand{\subs}{\subset}
\newcommand{\sups}{\supset}
\newcommand{\Ra}{\Rightarrow}
\newcommand{\thra}{\twoheadrightarrow}

\newcommand{\cups}[1]{\bigcup\limits_{{#1}}}

\newcommand{\sco}{,\ldots,}

\newcommand{\br}[1]{\bigl(#1\bigr)}

\newcommand{\ter}[1]{\textup{(}#1\textup{)}}
\newcommand{\bgm}[1]{\bigl|#1\bigr|}
\newcommand{\Bm}[1]{\Bigl|#1\Bigr|}

\newcommand{\bs}[1]{\bigl[#1\bigr]}
\newcommand{\bc}[1]{\bigl\{#1\bigr\}}
\newcommand{\case}[1]{\begin{cases}#1\end{cases}}
\newcommand{\Ga}{\Gamma}

\newcommand{\mbb}{\mathbb}
\newcommand{\Z}{\mbb{Z}}

\begin{document}

\author{Styrt O.\,G.\thanks{Russia, MIPT, oleg\_styrt@mail.ru}}
\title{Groups~$\Ga_n^4$\: algebraic properties}
\date{}
\newcommand{\udk}{?}
\newcommand{\msc}{20F36}

\maketitle

In the paper, groups~$\Ga_n^4$ closely connected with braid groups are researched from algebraic point of view.
More exactly, for $n\ge7$, it is proved that $\Ga_n^4$ is a~nilpotent finite $2$\dh group with $4$\dh torsion
and that its subgroup~$(\Ga_n^4)'$ is central.

\smallskip

\textit{Key words}\: groups~$\Ga_n^4$, braid groups.

\section*{The notations used}

\lonu{|>{$}l<{$}|l|}
{\ncu{>{\nwl}p{2cm}<{\nwl}}{Expression} \nc{>{\nwl}p{9cm}<{\nwl}}{Meaning}}{%
[n] & $\{1\sco n\}$
\news
C_M^k & $\bc{M'\subs M\cln|M'|=k}$
\news
[a,b] & $aba^{-1}b^{-1}$
\news
Z(a) & $\bc{b\cln[a,b]=e}$}

\section{Introduction}\label{introd}

The paper~\cite{Manick} is devoted to researching groups~$\Ga_n^4$ that are closely connected with braid groups.
The group~$\Ga_n^4$ is defined in~\cite[\Ss2]{Manick} by
\begin{itemize}
\item the generators~$d_{(ijkl)}$ for all pairwise distinct $i,j,k,l\in[n]$\~
\item the relations 1--4.
\end{itemize}
For convenience, denote by~$S$ the generator set and by~$s_{ik}^{jl}$ the generator~$d_{(ijkl)}$. For $P\in C_{[n]}^4$,
set $S_P:=\bc{s_{ik}^{jl}\cln\{i,j,k,l\}=P}\subs S$. Then, the relations \ref{inv}--\ref{sym} have the following sense\:
\begin{enumerate}
\renewcommand{\labheadi}[1]{\textup{#1}}
\renewcommand{\labelenumi}{\labheadi{\arabic{enumi}}\textup{.}}
\renewcommand{\theenumi}{\labheadi{\arabic{enumi}}}
\item\label{inv} all elements of~$S$ are involutions\~
\item\label{comm} if $P,Q\in C_{[n]}^4$ and $|P\cap Q|\le2$, then $[S_P,S_Q]=\{e\}$\~
\addtocounter{enumi}{1}
\item\label{sym} $s_{ik}^{jl}=s_{ki}^{jl}=s_{ik}^{lj}=s_{jl}^{ik}$\ti so, each element~$s_{ik}^{jl}$ is uniquely defined by the \textit{unordered} pair $\bc{\{i,k\},\{j,l\}}$ of the disjoint \textit{unordered} pairs $\{i,k\}$ and $\{j,l\}$\~
\addtocounter{enumi}{-1}
\addtocounter{enumi}{-1}
\item\label{pent} if $\{i,j,k,l,m\}\in C_{[n]}^5$, then $s_{ik}^{jl}s_{il}^{jm}s_{jl}^{km}s_{ik}^{jm}s_{il}^{km}=e$, i.\,e.
$s_{ki}^{lj}s_{il}^{jm}s_{lj}^{mk}s_{jm}^{ki}s_{mk}^{il}=e$.
\end{enumerate}
Hence, the condition~\ref{pent} means that any pairwise distinct numbers $k_1\sco k_5\in[n]$ satisfy
\begin{equation}\label{pen}
s_{k_1k_2}^{k_3k_4}\cdot s_{k_2k_3}^{k_4k_5}\cdot s_{k_3k_4}^{k_5k_1}\cdot s_{k_4k_5}^{k_1k_2}\cdot s_{k_5k_1}^{k_2k_3}=e.
\end{equation}

The main result of the paper is the following theorem.

\begin{theorem}\label{main} If $n\ge7$, then
\begin{itemize}
\item the subgroup~$(\Ga_n^4)'$ of~$\Ga_n^4$ is central\~
\item $(\Ga_n^4)'\cong(\Z_2)^p$ and $\Ga_n^4/(\Ga_n^4)'\cong(\Z_2)^q$, where $p\le C_n^3$ and $q\le3\cdot C_n^4$.
\end{itemize}
\end{theorem}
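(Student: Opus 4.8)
The group $\Ga_n^4$ is generated by involutions, so the quotient by the normal closure of all commutators is generated by commuting involutions; hence $\Ga_n^4/(\Ga_n^4)'$ is elementary abelian $2$-group, and the bound $q\le 3\cdot C_n^4$ is just the cardinality $|S|=3\cdot C_n^4$ of the generating set. So the real content is: (i) $(\Ga_n^4)'$ is central, and (ii) $(\Ga_n^4)'$ is elementary abelian of rank $\le C_n^3$.

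**Centrality of the commutator subgroup.** The key elements of $(\Ga_n^4)'$ are the commutators $[s,t]$ with $s,t\in S$. By relation \ref{comm}, $[s,t]=e$ unless the two $4$-subsets share exactly three points, so the only nontrivial such commutators involve generators $s_{ik}^{jl}$, $s_{i'k'}^{j'l'}$ with $|\{i,j,k,l\}\cap\{i',j',k',l'\}|=3$, i.e. they live inside a common $5$-element subset. The plan is to prove that every such $[s,t]$ commutes with every generator $u\in S$. If $u$'s support meets $\{i,j,k,l\}\cup\{i',j',k',l'\}$ (a $5$-set) in $\le 2$ points, then $u$ commutes with both $s$ and $t$ by \ref{comm}, hence with $[s,t]$; so one only needs to handle $u$ supported on a $7$- or $6$- or fewer-element set meeting the given $5$-set in $\ge 3$ points — and this is exactly where $n\ge 7$ and the pentagon relation \eqref{pen} get used. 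Concretely, inside a fixed $5$-set the pentagon relation \eqref{pen} gives a presentation of the subgroup generated by $S_P$ for the $4$-subsets $P$ of that $5$-set, and one computes directly (this is the routine but lengthy part) that all commutators among these generators are central in that subgroup; then one propagates centrality to the whole group using overlapping $5$-, $6$-, $7$-subsets — here $n\ge 7$ guarantees there is "enough room" to move any generator away. I expect this propagation argument, organized by how two $4$-sets and a third generator's $4$-set overlap, to be the main obstacle: it requires a careful case analysis on intersection patterns of three $4$-subsets of $[n]$.

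**Structure of $(\Ga_n^4)'$.** Once centrality is established, $(\Ga_n^4)'$ is abelian, and since it is generated by the commutators $[s,t]$ of involutions $s,t$ with $st$ of order dividing — one shows $(st)^2$ is trivial in the relevant cases, so $[s,t]^2 = (stst)... = e$, giving that $(\Ga_n^4)'$ is elementary abelian $2$-group. For the rank bound $p\le C_n^3$: every nontrivial generator $[s,t]$ of $(\Ga_n^4)'$ is supported on a $5$-subset of $[n]$; using the pentagon relation \eqref{pen} within that $5$-set one shows that, modulo $(\Ga_n^4)''=e$, all the $\binom{5}{3}\cdot(\text{something})$ commutators coming from one $5$-set reduce to combinations of at most $C_5^3=10$ "basic" ones attached to the $3$-subsets of that $5$-set (the pentagon relation is precisely a relation among five generators indexed by the cyclic shifts, which cuts the rank down), and then a global counting/consistency check shows the whole of $(\Ga_n^4)'$ is spanned by one basic commutator per $3$-subset of $[n]$, giving $p\le C_n^3$. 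The hard part here is identifying the correct "basic" central commutator associated to each $3$-element subset and checking that the pentagon relations among different $5$-sets are consistent with this assignment; I would set up a homomorphism from $(\Z_2)^{C_n^3}$ onto $(\Ga_n^4)'$ sending the generator indexed by a $3$-set $\{a,b,c\}$ to the appropriate commutator, and verify it is well defined and surjective by checking it kills nothing forced by relations \ref{inv}--\ref{pent}.
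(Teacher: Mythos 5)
Your high-level plan matches the shape of the paper's argument (show the commutators of generators are central involutions indexed by $3$-subsets, then show the quotient by the subgroup they generate is elementary abelian on $\le|S|$ generators), but as written it is a plan rather than a proof: every step that actually uses the pentagon relation~\eqref{pen} is deferred as ``routine but lengthy'' or ``the main obstacle''. The load-bearing fact in the paper is an independence lemma (Lemma~\ref{eqv}): for a fixed $s\in S_P$ and a fixed $3$-subset $K\subs P$, \emph{all} generators supported on $K\sqcup\{k'\}$ with $k'\notin P$ have the same commutator with $s$. This is what forces all nontrivial commutators attached to a $3$-set $K$ to collapse to a single element $c^{(K)}$ (hence $p\le C_n^3$), forces $c^{(K)}$ to be an involution (via $c^{(K)}_{ji}=(c^{(K)}_{ij})^{-1}$ — your sketch of why $[s,t]^2=e$ is garbled and does not follow just from $s,t$ being involutions), and drives the case analysis proving centrality. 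You correctly guess the indexing by $3$-subsets, but you neither state nor prove this independence, and without it the ``global counting/consistency check'' you invoke has no starting point.

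There is also a concrete oversight. You assert that by relation~\ref{comm} the only nontrivial commutators $[s,t]$ of generators occur when the two supporting $4$-sets share exactly three points. That omits the case $P=Q$: each $S_P$ contains three distinct generators, and relation~\ref{comm} says nothing about whether they commute with one another. To conclude $(\Ga_n^4)'\subs H$ (equivalently, that $\Ga_n^4/H$ is abelian, which is what yields $H=(\Ga_n^4)'$ and the bound $q\le3\cdot C_n^4$), one must show that $[\ol{s}_{k_1k_2}^{k_3k_4},\ol{s}_{k_1k_4}^{k_3k_2}]=e$ in $\Ga_n^4/H$; in the paper this is Lemma~\ref{coal}, proved by expanding both generators via~\eqref{pen} over an auxiliary fifth index $k_5$ and using that the resulting factors commute modulo $H$. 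Your proposal never addresses these same-support commutators, so even granting all the deferred computations, the argument as outlined does not close.
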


\begin{imp}\label{submain} If $n\ge7$, then
\begin{itemize}
\item $\Ga_n^4$ is a~nilpotent finite $2$\dh group\~
\item $\bs{\Ga_n^4,(\Ga_n^4)'}=\{e\}$\~
\item $a^4=e$ for each $a\in\Ga_n^4$.
\end{itemize}
\end{imp}

\section{Proofs of the results}\label{prove}

In this section, Theorem~\ref{main} is proved.

For $a,x,y\in\Ga_n^4$, write $x\sim_a y$ if the following (obviously, equivalent) conditions hold\:
\begin{enumerate}
\item $x^{-1}y\in Z(a)$\~
\item $xax^{-1}=yay^{-1}$\~
\item $[x,a]=[y,a]$\~
\item $[a,x]=[a,y]$.
\end{enumerate}
It is clear that, with $a$ fixed, the \textit{binary} relation~$\sim_a$ on \textit{two} elements $x,y$ is an equivalence.

From now, we will assume that $n\ge6$.

\begin{lemma}\label{eqv} If $P\in C_{[n]}^4$, $K\in C_P^3$, and $s\in S_P$,
then $s_1\sim_s s_2$ for all $s_1,s_2\in\cups{k'\notin P}S_{K\sqcup\{k'\}}$.
\end{lemma}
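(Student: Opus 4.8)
The plan is to show that any two generators $s_1, s_2 \in \cups{k' \notin P} S_{K \sqcup \{k'\}}$ are connected by a chain of "elementary moves" each of which preserves the class modulo $\sim_s$, and then invoke the transitivity of $\sim_s$ (noted just before the lemma). There are two kinds of situations to handle. First, if $s_1, s_2 \in S_{K \sqcup \{k'\}}$ for the \emph{same} $k' \notin P$, then by relation~\ref{sym} each of $s_1, s_2$ differs from a fixed reference generator in $S_{K \sqcup \{k'\}}$, and since $|K \sqcup \{k'\} \cap P| = 3 > 2$ we cannot directly use relation~\ref{comm}; instead I would use the pentagon relation~\eqref{pen} applied to the five numbers $K \sqcup \{k'\} \cup \{s\text{-index not in } K\}$ — wait, more carefully: write $P = K \sqcup \{j_0\}$, so $s \in S_P$ involves exactly the four indices of $P$. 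For the five pairwise distinct numbers consisting of $K$, $j_0$, and $k'$, the pentagon relation expresses a product of five generators from $S_{K \sqcup \{j_0\}} = S_P$, $S_{K \sqcup \{k'\}}$ (two of them share three indices with each other), and $S_{\{j_0, k'\} \cup (\text{pair} \subset K)}$ equal to $e$. This lets me rewrite one generator of $S_{K \sqcup \{k'\}}$ as a product of $s$, other generators of $S_{K \sqcup \{k'\}}$, and generators $t$ of a set $S_Q$ with $|Q \cap P| = 2$, the latter commuting with $s$ by relation~\ref{comm}.

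Second, to move between $S_{K \sqcup \{k'\}}$ and $S_{K \sqcup \{k''\}}$ for distinct $k', k'' \notin P$, I would again deploy the pentagon relation on the five numbers $K \cup \{k', k''\}$: this relation involves generators from $S_{K \sqcup \{k'\}}$, $S_{K \sqcup \{k''\}}$, and one more set sharing only two indices with $P$ (hence commuting with $s$). Combined with the first step, this yields a chain $s_1 \sim_s \cdots \sim_s s_2$.

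The core computational lemma I will isolate first is this: if $u \in S_P$, $v \in S_{K \sqcup \{k'\}}$, and $w$ lies in some $S_Q$ with $|Q \cap P| \le 2$, then from a pentagon relation of the shape $u \cdot v_1 \cdot w \cdot v_2 \cdot w' = e$ (with $v_1, v_2 \in S_{K \sqcup \{k'\}}$, $w, w' \in$ commuting sets), one gets $v_1 v_2 = u^{-1}(w')^{-1}(w)^{-1}$-type expressions, and using that $w, w', s$ pairwise commute (here crucially $s \in S_P$ and $s$ need not equal $u$, but $|P \cap P| = 4$, so $s$ and $u$ need \emph{not} commute — this is the delicate point) one shows $v_1 \sim_s v_2$. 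I expect the main obstacle to be exactly this: $s$ and the generators of $S_P$ appearing in the pentagon relation do \emph{not} commute, so I cannot simply "cancel" them; instead I must track how conjugation by $s$ acts and verify that the $s$-conjugation-defect of $v_1 v_2^{-1}$ vanishes. The way around it is to choose the pentagon pentuple so that $s$ itself appears as one of the five factors (taking the reference generator in $S_P$ to be $s$), turning the relation into an identity that already "knows about" $s$, and then all remaining factors lie in sets meeting $P$ in $\le 2$ points and hence commute with $s$. Verifying that every needed generator of $S_{K\sqcup\{k'\}}$ can be reached this way — i.e. that the three generators in $S_{K \sqcup \{k'\}}$ are all $\sim_s$-equivalent, and likewise across different $k'$ — is then a finite check over the symmetry group of relation~\ref{sym}, which I would organize as: (a) all three elements of a fixed $S_{K \sqcup \{k'\}}$ are mutually $\sim_s$-equivalent; (b) passing $k' \rightsquigarrow k''$ preserves the class; conclude by transitivity.
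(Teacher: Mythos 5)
Your step (b) ti the pentagon on the five numbers $K\cup\{k',k''\}$ with distinct $k',k''\notin P$ ti is exactly the computation the paper uses: in that instance of \eqref{pen} the three factors other than the ones from $S_{K\sqcup\{k'\}}$ and $S_{K\sqcup\{k''\}}$ lie in sets of the form $\br{K\sm\{k\}}\sqcup\{k',k''\}$, which meet $P$ in only two points and hence centralize $s$ by relation~\ref{comm}, so the product of the two interesting factors lies in $Z(s)$. Your step (a), however, has a genuine gap, in two respects. First, a single instance of \eqref{pen} contains exactly one generator from each of the five $4$\dh element subsets of the chosen $5$\dh set, so a relation of the shape $u\cdot v_1\cdot w\cdot v_2\cdot w'=e$ with $v_1,v_2$ both in the same $S_{K\sqcup\{k'\}}$ never occurs; the ``core computational lemma'' you isolate cannot be instantiated. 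Second, in the pentagon on $K\cup\{j_0,k'\}$ (where $P=K\sqcup\{j_0\}$) the three factors other than the one in $S_P$ and the one in $S_{K\sqcup\{k'\}}$ lie in the sets $\br{K\sm\{k\}}\sqcup\{j_0,k'\}$, each of which meets $P$ in \emph{three} points (it contains $j_0$ and two elements of $K$). Relation~\ref{comm} does not apply to them, so making $s$ itself one of the five factors does not rescue the argument: the remaining factors still fail to commute with $s$, and the ``$s$\dh conjugation defect'' you would need to control does not visibly vanish.

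The repair is to discard step (a) altogether. Since $n\ge6$, there are at least two choices of $k'\notin P$, so two elements of the same $S_{K\sqcup\{k'\}}$ can be linked by transitivity of $\sim_s$ through an element of $S_{K\sqcup\{k''\}}$ for a second $k''\notin P$. This is precisely the paper's reduction: writing $t_{k'k}$ for the generator of $S_{K\sqcup\{k'\}}$ that pairs $k'$ with $k\in K$, it suffices (by transitivity, using $\bgm{[n]\sm P}\ge2$ and $|K|=3$) to prove $t_{k_1i}\sim_s t_{k_2j}$ for distinct $k_1,k_2\notin P$ and \emph{distinct} $i,j\in K$, and every such pair is handled by one instance of your step (b) with a suitable cyclic ordering of $K\cup\{k_1,k_2\}$. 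So the correct proof is your (b) plus bookkeeping, with no trace of (a).
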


\begin{proof} For $k'\notin K$ and $k\in K$, set $t_{k'k}:=s_{k'k}^{ij}$ where $\{i,j\}=K\sm\{k\}$.

Note that $\bgm{[n]\sm P}=n-4\ge2$. Thus, it suffices to prove for any
distinct $k_1,k_2\in[n]\sm P$ and distinct $i,j\in K$ the relation $t_{k_1i}\sim_s t_{k_2j}$.

In~$K$, take arbitrary distinct elements $k_3,k_5$ and denote the rest one by~$k_4$.
Each subset~$Q$ of type $\br{K\sm\{k\}}\sqcup\{k_1,k_2\}$ ($k\in K$) satisfies
$Q\in C_{[n]}^4$ and $P\cap Q\subs Q\sm\{k_1,k_2\}=K\sm\{k\}$ that implies $S_Q\subs Z(s)$. By~\eqref{pen},
$t_{k_2k_3}^{-1}t_{k_1k_5}=
s_{k_2k_3}^{k_4k_5}\cdot s_{k_3k_4}^{k_5k_1}=s_{k_1k_2}^{k_3k_4}\cdot s_{k_5k_1}^{k_2k_3}\cdot s_{k_4k_5}^{k_1k_2}\in Z(s)$.
\end{proof}

\begin{lemma}\label{com} Take any subset $K\in C_{[n]}^3$. Set $S_{(i)}:=S_{K\sqcup\{i\}}$ \ter{$i\notin K$}.
Then all elements of type $[s_i,s_j]$ \ter{$s_i\in S_{(i)}$, $s_j\in S_{(j)}$, $i,j\notin K$, $i\ne j$}
are the same involution~$c^{(K)}$ that is central for $n\ge7$.
\end{lemma}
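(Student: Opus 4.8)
The plan is to first establish that all the commutators $[s_i,s_j]$ coincide, and then, using the pentagon relation together with Lemma~\ref{eqv}, to show that this common element is central.

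\textbf{Step 1: the commutators $[s_i,s_j]$ do not depend on the choice of $s_i\in S_{(i)}$, $s_j\in S_{(j)}$, and not even on $i,j$.}
Fix $K=\{a,b,c\}$ and $i,j\notin K$, $i\ne j$. For any two choices $s_i,s_i'\in S_{(i)}$, by Lemma~\ref{eqv} applied with $P=K\sqcup\{j\}$ and the $3$\dh subset $K\subs P$ (and the generator $s_j\in S_P$), we get $s_i\sim_{s_j}s_i'$, so $[s_j,s_i]=[s_j,s_i']$, hence $[s_i,s_j]=[s_i',s_j]$ after inverting. Symmetrically the value is independent of $s_j\in S_{(j)}$. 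To remove the dependence on the \emph{indices}, note that if $j,j'\notin K\cup\{i\}$ are distinct, then again by Lemma~\ref{eqv} (now with $P=K\sqcup\{i\}$, $s=s_i$) all generators in $S_{(j)}\cup S_{(j')}$ are $\sim_{s_i}$-equivalent, so $[s_i,s_j]=[s_i,s_{j'}]$; since $n-3\ge3$ we can chain through a common third index to compare any $(i,j)$ with any $(i',j')$. Thus there is a single element $c^{(K)}:=[s_i,s_j]$.

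\textbf{Step 2: $c^{(K)}$ is an involution.}
Since each $s_i,s_j$ is an involution (relation~\ref{inv}), $[s_i,s_j]^{-1}=[s_j,s_i]=s_js_is_js_i$; on the other hand, by Step~1 with the roles of $i$ and $j$ interchanged, $[s_j,s_i]=c^{(K)}$ as well (the definition is symmetric in the two indices, as the index set $\{i,j\}$ is unordered in the argument above). Hence $c^{(K)}=(c^{(K)})^{-1}$, i.e.\ $(c^{(K)})^2=e$.

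\textbf{Step 3: centrality for $n\ge7$.}
Here is where the bound $n\ge7$ enters, and this is the main obstacle. Write $c=c^{(K)}$; we must show $c$ commutes with every generator $s_{pq}^{rs}\in S$, and by the defining presentation it suffices to check this on all of $S$. If $Q\in C_{[n]}^4$ has $|K\cap Q|\le1$, then picking $i,j\in K\sm Q$ (possible since $|K\sm Q|\ge2$) and $s_i\in S_{(i)}$, $s_j\in S_{(j)}$, relation~\ref{comm} gives $S_Q\subs Z(s_i)\cap Z(s_j)$, so every element of $S_Q$ commutes with $c=[s_i,s_j]$. The remaining, harder cases are $|K\cap Q|\in\{2,3\}$; here one needs to choose the representatives $s_i,s_j$ cleverly — using the freedom from Step~1 to pick $i$ or $j$ \emph{inside} $Q$ when possible, or to pick an auxiliary fifth index (available because $n\ge7$ guarantees at least $n-4\ge3$ indices outside any $4$\dh set, and enough room to place $i,j$ outside $K$ but interacting correctly with $Q$) — and then to invoke the pentagon relation~\eqref{pen} to rewrite $c$ as a product of generators each of which manifestly commutes with the given element of $S_Q$ via relation~\ref{comm}. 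I expect the proof to split into a short case analysis on $|K\cap Q|$ and on how $Q\sm K$ meets $\{i,j\}$, with the pentagon identity doing the real work in the tightest case; verifying that $n\ge7$ (rather than $n\ge6$) is exactly what is needed to always find the required auxiliary index is the crux.
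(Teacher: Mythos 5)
Your Steps 1 and 2 are correct and essentially reproduce the paper's argument: Lemma~\ref{eqv} applied with $P=K\sqcup\{j\}$ (resp.\ $P=K\sqcup\{i\}$) kills the dependence on the representative and on one index, and chaining through a third index (available since $|[n]\sm K|=n-3\ge3$) gives a single value $c^{(K)}=[s_i,s_j]=[s_j,s_i]=(c^{(K)})^{-1}$. A small slip there: in Step 3 you propose to pick ``$i,j\in K\sm Q$'', but the subscripts of $s_i$ must lie \emph{outside} $K$ (recall $S_{(i)}=S_{K\sqcup\{i\}}$ for $i\notin K$); the case $|K\cap Q|\le1$ is nevertheless fine because then $\bigl|Q\cap(K\sqcup\{i\})\bigr|\le2$ for \emph{any} $i\notin K$, so relation~\ref{comm} applies directly.

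The genuine gap is Step 3 for $|K\cap Q|\ge2$: you state a plan (clever choice of representatives plus the pentagon relation) but do not carry it out, and you acknowledge this is the crux. The paper closes this case without any further use of \eqref{pen} beyond what is already packaged in Lemma~\ref{eqv}. The key points you are missing are: (i) $c^{(K)}=[s_1,s_2]=(s_1s_2)^2$ where $s_i\in S_{(k_i)}$, so it suffices to show $s_1s_2\in Z(s)$ or $s_1,s_2\in Z(s)$; (ii) Lemma~\ref{eqv} applied to $P=K\sqcup\{k_0\}$ says precisely that $s_1s_2\in Z(s)$ for $s\in S_P$ whenever $k_1,k_2\notin P$ \ti{}this disposes of the hardest case $P\sups K$; (iii) a counting argument pins down everything else: choosing $k_1,k_2$ outside $P$ whenever $|M\sm P|\ge2$, the assumption $c^{(K)}\notin Z(s)$ forces some $s_i\notin Z(s)$, hence $\bigl|(K\sqcup\{k_i\})\cap P\bigr|\ge3$ and $|K\cap P|\ge2$; then $|M\sm P|=n-|K\cup P|\ge7-|K\cup P|=|K\cap P|\ge2$ (this is exactly where $n\ge7$ enters), so $k_1,k_2\notin P$, forcing $P\sups K$ and contradicting (ii). Without some version of (i)--(iii), your Step 3 is an outline rather than a proof.
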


\begin{proof} Set $M:=[n]\sm K$. Clearly, $|M|=n-3\ge3$.
If $k_0\in M$ and $s\in S_{(k_0)}$, then applying Lemma~\ref{eqv} to $P:=K\sqcup\{k_0\}$
gives for any $s_1,s_2\in\cups{k'\in M\sm\{k_0\}}S_{(k')}$ the relations $s_1s_2\in Z(s)$,
$[s_1,s]=[s_2,s]$, and $[s,s_1]=[s,s_2]$.
Therefore\:
\begin{itemize}
\item If $\{i,j\}\in C_M^2$, then all elements $[s_i,s_j]$ ($s_i\in S_{(i)}$, $s_j\in S_{(j)}$)
are the same element~$c^{(K)}_{ij}$. It is obvious that $c^{(K)}_{ji}=(c^{(K)}_{ij})^{-1}$.
\item If $k,i,j\in M$ and $i,j\ne k$, then $c^{(K)}_{ik}=c^{(K)}_{jk}$ and $c^{(K)}_{ki}=c^{(K)}_{kj}$.
\end{itemize}
Since $|M|\ge3$, all elements $c^{(K)}_{ij}$ ($i,j\in M$, $i\ne j$) are the same element $c^{(K)}=(c^{(K)})^{-1}$.
It remains to prove that $[c^{(K)},\Ga_n^4]=\{e\}$ for $n\ge7$.

Suppose that $n\ge7$, $P\in C_{[n]}^4$, and $s\in S_P$\~ show that $c^{(K)}\in Z(s)$.

There exist distinct numbers $k_1,k_2\in M$ such that $\br{|M\sm P|\ge2}\Ra(k_1,k_2\notin P)$.
Take any elements $s_i\in S_{(k_i)}$ ($i\in[2]$). Then $c^{(K)}=[s_1,s_2]=(s_1s_2)^2$.

Assume that $c^{(K)}\notin Z(s)$.

We have $s_1s_2\notin Z(s)$ and $s_i\notin Z(s)$ for some $i\in[2]$. Hence,
$\Bm{\br{K\sqcup\{k_i\}}\cap P}\ge3$. Thus,
\begin{itemize}
\item $|K\cap P|\ge2$\~
\item if $k_i\notin P$, then $|K\cap P|\ge3$, i.\,e. $P\sups K$.
\end{itemize}
Note that $|M\sm P|=n-|K\cup P|\ge7-|K\cup P|=|K\cap P|\ge2$. Therefore, $k_1,k_2\notin P$, $P\sups K$.
So, $P=K\sqcup\{k_0\}$ ($k_0\in M$, $k_0\ne k_1,k_2$). Hence, $s_1s_2\in Z(s)$, a~contradiction.

Thus, if $n\ge7$, then $[c^{(K)},S]=\{e\}$ and, therefore, $[c^{(K)},\Ga_n^4]=\{e\}$.
\end{proof}

From now, assume that $n\ge7$.

Due to Lemma~\ref{com}, the subgroup $H\subs\Ga_n^4$ generated by the involutions
$c^{(K)}$ ($K\in C_{[n]}^3$) is central; hence, $H\nl\Ga_n^4$ and $H\cong(\Z_2)^p$, $p\le C_n^3$.
Denote by~$\pi$ the factoring homomorphism $\Ga_n^4\thra\Ga_n^4/H$\~ write $\ol{x}$ for $\pi(x)$.

\begin{lemma}\label{codi} If $P,Q\in C_{[n]}^4$ and $P\ne Q$, then $[\ol{S}_P,\ol{S}_Q]=\{e\}\subs\Ga_n^4/H$.
\end{lemma}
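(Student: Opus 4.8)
The plan is to reduce the claim to the already-proved commutation facts modulo $H$, distinguishing cases according to $|P\cap Q|$. Since the relation~\ref{comm} gives $[S_P,S_Q]=\{e\}$ whenever $|P\cap Q|\le2$, and hence a fortiori $[\ol S_P,\ol S_Q]=\{e\}$, the only case that requires work is $|P\cap Q|=3$. So I would write $P=K\sqcup\{i\}$ and $Q=K\sqcup\{j\}$ with $K:=P\cap Q\in C_{[n]}^3$ and $i\ne j$, $i,j\notin K$. In the notation of Lemma~\ref{com} this is exactly $S_P=S_{(i)}$ and $S_Q=S_{(j)}$, so for any $s_i\in S_P$ and $s_j\in S_Q$ we have $[s_i,s_j]=c^{(K)}\in H$ by Lemma~\ref{com}. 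Applying $\pi$, $[\ol s_i,\ol s_j]=\ol{c^{(K)}}=\ol e=e$ in $\Ga_n^4/H$, which is precisely $[\ol S_P,\ol S_Q]=\{e\}$ in this case.

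Concretely the steps are: (1) observe $n\ge7$ and $P\ne Q$ with $|P|=|Q|=4$ force $|P\cap Q|\in\{0,1,2,3\}$; (2) for $|P\cap Q|\le2$ invoke relation~\ref{comm} and push it through $\pi$; (3) for $|P\cap Q|=3$ set $K:=P\cap Q$, identify $S_P,S_Q$ with the sets $S_{(i)},S_{(j)}$ of Lemma~\ref{com}, and conclude $[S_P,S_Q]\subs H$, hence $[\ol S_P,\ol S_Q]=\{e\}$. Then $[\ol S_P,\ol S_Q]=\{e\}$ in all cases, proving the lemma.

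I do not expect a serious obstacle here; the lemma is essentially a corollary of Lemma~\ref{com}, whose proof already did the hard combinatorial and pentagon-relation bookkeeping. The only point to be careful about is the bookkeeping of the case split and making sure that when $|P\cap Q|=3$ the elements $i,j$ are genuinely distinct and outside $K$ (which is automatic from $|P|=|Q|=4$, $|P\cap Q|=3$, $P\ne Q$), so that Lemma~\ref{com} applies verbatim. If anything, the mild subtlety is simply noticing that no case is missed and that passing to the quotient by the central subgroup $H$ is what kills the commutator $c^{(K)}$ in the $|P\cap Q|=3$ case.
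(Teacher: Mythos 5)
Your proposal is correct and matches the paper's proof: the paper likewise splits on $|P\cap Q|$, using relation~\ref{comm} when $|P\cap Q|\le2$ and Lemma~\ref{com} to identify $[S_P,S_Q]=\{c^{(P\cap Q)}\}\subs H$ when $|P\cap Q|=3$, then passes to the quotient. No gaps.
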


\begin{proof} We have $[S_P,S_Q]\subs H$ since
\begin{equation*}
[S_P,S_Q]=\case{
\{e\},&|P\cap Q|\le2;\\
\{c^{(P\cap Q)}\},&|P\cap Q|=3.}\qedhere
\end{equation*}
\end{proof}

\begin{lemma}\label{coal} All elements $\ol{s}\in\Ga_n^4/H$ \ter{$s\in S$} pairwise commute.
\end{lemma}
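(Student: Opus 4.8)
The plan is to reduce the statement to Lemma~\ref{codi}. Since $\ol S=\cups{P\in C_{[n]}^4}\ol S_P$ generates $\Ga_n^4/H$ and, by Lemma~\ref{codi}, $\ol s_1$ and $\ol s_2$ commute whenever $s_1\in S_P$ and $s_2\in S_Q$ with $P\ne Q$, it suffices to prove that for each fixed $P\in C_{[n]}^4$ the elements of~$\ol S_P$ pairwise commute. (Recall $|S_P|=3$: a generator in~$S_P$ is determined by a partition of~$P$ into two pairs, of which there are three.)

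To treat a single~$P$ I would use the pentagon relation~\eqref{pen} to rewrite the generators of~$S_P$ through generators attached to \textit{other} $4$-subsets. The basic identity is this: if $P=\{i,j,k,l\}$, $m\in[n]\sm P$, and $s=s_{ik}^{jl}\in S_P$, then~\eqref{pen} applied to $(k_1\sco k_5)=(i,k,j,l,m)$ gives $s\cdot s_{kj}^{lm}\cdot s_{jl}^{mi}\cdot s_{lm}^{ik}\cdot s_{mi}^{kj}=e$, and, the last four factors being involutions, $s=s_{mi}^{kj}\,s_{lm}^{ik}\,s_{jl}^{mi}\,s_{kj}^{lm}$. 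Each factor on the right belongs to $S_R$ with $R=\br{P\sm\{x\}}\sqcup\{m\}$ for some $x\in P$; in particular $R\subs P\cup\{m\}$ and $m\in R$.

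The naive continuation \ti fix one~$m$, write two generators $s_1,s_2\in S_P$ this way, and compare $\ol s_1\ol s_2$ with $\ol s_2\ol s_1$ \ti fails: it only re-expresses the desired commutator through commutators of generator images \textit{inside} the four neighbouring $4$-subsets $\br{P\sm\{x\}}\sqcup\{m\}$, i.\,e.\ through problems of exactly the same kind. Breaking this circular reduction is where the hypothesis $n\ge7$ is needed: since $\bgm{[n]\sm P}=n-4\ge3$, I would choose \textit{distinct} auxiliary elements $m_1,m_2,m_3\in[n]\sm P$ and, for $t=1,2,3$, apply the identity above with $m=m_t$ to the $t$-th generator of~$S_P$, writing its image as a product of the images of four generators, each belonging to a $4$-subset $R$ with $R\subs P\cup\{m_t\}$ and $m_t\in R$. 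For $t\ne t'$, such an $R$ for index~$t$ lies in $P\cup\{m_t\}$, which does not contain~$m_{t'}$, whereas every $4$-subset occurring for index~$t'$ contains~$m_{t'}$; hence these two collections of $4$-subsets are disjoint, so by Lemma~\ref{codi} every factor of the first product commutes with every factor of the second. Therefore the two generator images commute, and since $P$ was arbitrary, all of~$\ol S$ pairwise commutes.

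I expect the only genuine obstacle to be the observation just made: that one must spend a separate auxiliary element on each of the three generators of~$S_P$ \ti equivalently, that $n\ge7$ is the correct threshold \ti so as not to reduce the claim circularly to itself. Once the three pentagon identities are written down with distinct auxiliaries, the conclusion follows from Lemma~\ref{codi} with no further computation.
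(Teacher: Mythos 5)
Your proof is correct, and it executes the shared core idea\ti use the pentagon relation~\eqref{pen} to rewrite a generator of~$S_P$ as a product of four generators attached to the neighbouring $4$\dh subsets $\br{P\sm\{x\}}\sqcup\{m\}$ and then invoke Lemma~\ref{codi}\ti in a genuinely different way from the paper. The paper spends only \emph{one} auxiliary point~$k_5$ on both generators of the pair; as you correctly observe, the four $4$\dh subsets occurring in the two expansions then coincide, so the cross-factors do not all commute and Lemma~\ref{codi} alone does not close the argument. The paper breaks this by an explicit computation: in its notation~$r_{i'i}$, the product $\ol{s}_{k_1k_2}^{k_3k_4}\cdot\ol{s}_{k_1k_4}^{k_3k_2}$ collapses (using that factors with distinct first indices commute and that the~$r_{i'i}$ are involutions) to $r_{14}r_{12}r_{34}r_{32}$, an expression visibly symmetric under the relabelling $k_1\leftrightarrow k_3$ that exchanges the two generators. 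Your route instead spends a separate auxiliary point on each generator, so that the two families of $4$\dh subsets become disjoint (each subset of the first family contains~$m_t$, no subset of the second does) and every cross-commutator vanishes by Lemma~\ref{codi} with no computation at all. The trade-off: your argument is shorter and more transparent but needs at least two distinct points outside~$P$, i.\,e. $n\ge6$ (your three auxiliaries are harmless overkill\ti two per pair suffice), whereas the paper's computation works already with a single spare point; this is immaterial here, since the standing hypothesis $n\ge7$ is forced anyway by the centrality of~$H$ on which Lemma~\ref{codi} rests.
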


\begin{proof} By Lemma~\ref{codi}, it suffices to prove that $[\ol{s}_{k_1k_2}^{k_3k_4},\ol{s}_{k_1k_4}^{k_3k_2}]=e$
for any pairwise distinct~$k_i$. Consider the set $P\in C_{[n]}^4$ of these~$k_i$ and an arbitrary number $k_5\notin P$.
For distinct $i',i\in[4]$, set $r_{i'i}:=\ol{s}_{k_5k_i}^{ll'}$ where $\{l,l'\}=P\sm\{k_i,k_{i'}\}$
(so, $\{k_5,k_i,l,l'\}=\br{P\sqcup\{k_5\}}\sm\{k_{i'}\}$). It follows from Lemma~\ref{codi} that
$[r_{i'i},r_{j'j}]=e$ whenever $i'\ne j'$. By~\eqref{pen}, $\ol{s}_{k_1k_2}^{k_3k_4}=r_{14}r_{21}r_{34}r_{41}$.
Replace $k_2$ and~$k_4$\: $\ol{s}_{k_1k_4}^{k_3k_2}=r_{12}r_{41}r_{32}r_{21}$.
Hence, $\ol{s}_{k_1k_2}^{k_3k_4}\cdot\ol{s}_{k_1k_4}^{k_3k_2}=r_{14}r_{12}r_{34}r_{32}$.
Now replace $k_1$ and~$k_3$\: $\ol{s}_{k_3k_2}^{k_1k_4}\cdot\ol{s}_{k_3k_4}^{k_1k_2}=r_{34}r_{32}r_{14}r_{12}$,
i.\,e.
$\ol{s}_{k_1k_4}^{k_3k_2}\cdot\ol{s}_{k_1k_2}^{k_3k_4}=r_{34}r_{32}r_{14}r_{12}=r_{14}r_{12}r_{34}r_{32}=
\ol{s}_{k_1k_2}^{k_3k_4}\cdot\ol{s}_{k_1k_4}^{k_3k_2}$.
\end{proof}

By Lemma~\ref{coal}, the group $\Ga_n^4/H$ is generated by pairwise commuting involutions $\ol{s}$ ($s\in S$);
hence, $\Ga_n^4/H\cong(\Z_2)^q$, $q\le|S|\le3\cdot C_n^4$. In particular, $(\Ga_n^4/H)'=\{e\}$, i.\,e. $(\Ga_n^4)'\subs H$.
Since $c^{(K)}\in(\Ga_n^4)'$ for each $K\in C_{[n]}^3$, then $H=(\Ga_n^4)'$.

Thus, we completely proved Theorem~\ref{main} and, hence, Corollary~\ref{submain}.

\section*{Acknowledgements}

The author is grateful to Prof. \fbox{E.\,B.\?Vinberg} for exciting interest to algebra.
The author is grateful to Prof. V.\,O.\?Manturov for introducing and attracting to his research area
and for useful discussions.

The author dedicates the article to E.\,N.\?Troshina.

\end{document}